\theoremstyle{plain}
\newtheorem*{theorem}{Theorem}
\newtheorem*{Corollary 1}{Corollary 1}
\newtheorem*{Corollary 2}{Corollary 2}
\newtheorem*{Theorem 2 of [1]}{Theorem 2 of [1]}
\newtheorem*{definition}{Definition}
\newtheorem{lemma}{Lemma}
\numberwithin{equation}{section}
\begin{document}

\baselineskip=17pt

\title[ ]{On homogeneous locally conical spaces}

\author{Fredric D. Ancel}

\address{Department of Mathematical Sciences, University of Wisconsin - Milwaukee, Box 413, Milwaukee, WI 53201-0413}

\email{ancel@uwm.edu}

\author{David P. Bellamy}

\address{Department of Mathematics, University of Delaware, Newark, DE 19716}

\email{bellamy@udel.edu}

\thanks{}

\date{\today}
\subjclass[2010]{Primary 54B15, 54F15, 54H99; Secondary 54H15, 57N15, 57S05}
\keywords{}

\begin{abstract}
The main result of this article is:
\begin{theorem}
Every homogeneous locally conical connected separable metric space that is not a $1$-manifold is strongly $n$-homogeneous for each $n \geq 2$.  Furthermore, every homogeneous locally conical separable metric space is countable dense homogeneous.
\end{theorem}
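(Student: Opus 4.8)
The plan is to distill from the hypotheses good point-set behaviour together with an abundance of homeomorphisms of small support, and then to run the standard Effros and back-and-forth machinery. \emph{Point-set preliminaries:} for $x\in X$ call $L_x$ a link of $x$ if some open neighbourhood of $x$ is homeomorphic --- by a homeomorphism taking $x$ to the cone point --- to the open cone $c(L_x)$. Since $X$ is metrizable, $c(L_x)$ is first countable at its cone point, which forces $L_x$ to be compact; hence $X$ is locally compact, and, being separable metric, is Polish, so $\mathcal H(X)$ carries a Polish group topology under which its action on $X$ is continuous and transitive. Because $c(L_x)$ is locally connected at its cone point, homogeneity makes $X$ locally connected, whence each $L_x$ is locally connected. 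If moreover $X$ is connected and not a $1$-manifold, then no link is empty or disconnected: otherwise some point of $X$ locally separates it, hence by homogeneity every point does, and a homogeneous connected locally connected separable metric space with that property is a $1$-manifold. With all links connected, an induction on $\lvert F\rvert$ --- deleting one point from the interior of a small cone chart at each step --- shows $X\setminus F$ is connected, indeed locally arcwise connected, for every finite $F\subseteq X$; this fails for $\mathbb R$ and $S^1$, which is exactly why $1$-manifolds are excluded from the first assertion.

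\emph{Strong local homogeneity:} I would then show that $X$ is strongly locally homogeneous, i.e.\ each $w\in X$ has arbitrarily small open neighbourhoods $V$ with the property that for all $p,q\in V$ there is $h\in\mathcal H(X)$ with $h(p)=q$ and $h=\mathrm{id}$ on $X\setminus V$. Choosing $V$ to be a small cone chart centred at $w$, the only case requiring argument is moving the cone point $w$ to a nearby point $p\in V$. For this I would use homogeneity to pick $g\in\mathcal H(X)$ with $g(w)=p$, note that $g$ maps a still smaller cone chart at $w$ onto a neighbourhood of $p$ inside $V$, and thereby obtain (reidentifying that smaller chart with $c(L_w)$) an open embedding of $c(L_w)$ into $V$ carrying the cone point to $p$; an isotopy-extension argument of Alexander type in the region between the embedded cone and the frontier of $V$ --- feasible because, away from its vertex, the cone has the product form $L_w\times[a,b]$ with $L_w$ compact --- then upgrades this embedding to a homeomorphism of $X$ supported in $V$ that still carries $w$ to $p$. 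This step, and the bookkeeping needed to match it with the hypotheses of Theorem~2 of [1], is the point I expect to be the main obstacle.

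\emph{Strong $n$-homogeneity:} once $X$ is known to be strongly locally homogeneous, the subgroups $\mathcal H_F(X)=\{h\in\mathcal H(X):h|_F=\mathrm{id}\}$ act transitively on $X\setminus F$ for every finite $F$: to move $p$ to $q$ in $X\setminus F$, join them by an arc there, cover it by small cone charts avoiding $F$, and push $p$ along the arc by a finite composition of homeomorphisms supported in those charts, each of which fixes $F$. Feeding this into Theorem~2 of [1] upgrades transitivity to micro-transitivity, yielding the controlled \emph{Key Lemma}: for any $p,q\in X$ and any finite $F\subseteq X\setminus\{p,q\}$ there is $h\in\mathcal H(X)$ with $h(p)=q$, $h|_F=\mathrm{id}$, and (if desired) $h$ of arbitrarily small support and displacement. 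Strong $n$-homogeneity follows at once: given $n$-element sets $A=\{a_1,\dots,a_n\}$ and $B=\{b_1,\dots,b_n\}$, take $h_1$ with $h_1(a_1)=b_1$, then $h_2$ fixing $b_1$ with $h_2h_1(a_2)=b_2$, and so on, so that $h_n\circ\cdots\circ h_1$ carries $A$ onto $B$. (The hypothesis $n\ge2$ is not used; for $n=1$ this is just homogeneity.)

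\emph{Countable dense homogeneity:} here connectedness and the $1$-manifold exclusion are dropped. A $1$-manifold is classically countable dense homogeneous, and a disconnected $X$ has countably many mutually homeomorphic connected homogeneous locally conical components transitively permuted by $\mathcal H(X)$, so one may assume $X$ connected and not a $1$-manifold. Given countable dense sets $D=\{d_i\}$ and $E=\{e_i\}$, run a back-and-forth: build $\mathrm{id}=h_0,h_1,h_2,\dots\in\mathcal H(X)$ and increasing finite matched sets $P_k\subseteq D$, $Q_k\subseteq E$ with $h_k(P_k)=Q_k$, adjoining at odd stages the next unused point of $D$ (matched to a nearby point of $E$) and at even stages the next unused point of $E$ (matched from a nearby point of $D$), each extension effected by pre- or post-composing $h_{k-1}$ with a homeomorphism from the controlled Key Lemma that fixes the already-matched points, carries one prescribed point to a point of $E$ (resp.\ of $D$) inside a prescribed small ball, and has support of small diameter. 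Taking the balls small enough that $\sum_k\widehat d(h_k,h_{k-1})<\infty$, and similarly for the inverses, the $h_k$ converge uniformly to a homeomorphism $h$ of $X$ (using completeness of $X$); since every $d_i$ and every $e_j$ is eventually incorporated, $h(D)=E$.
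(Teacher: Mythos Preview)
Your overall architecture matches the paper's: establish that any two points of an open cone neighbourhood $U$ can be exchanged by a homeomorphism supported in $U$ (strong local homogeneity), deduce that $X\setminus F$ is path connected for finite $F$, push along chains of cone charts for strong $n$-homogeneity, and run a back-and-forth (equivalently, cite Bennett) for countable dense homogeneity. The disagreement is in how and where Effros' theorem enters, and that disagreement is where the gap lies.

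The genuine gap is your strong local homogeneity step. You take an arbitrary $g\in\mathcal H(X)$ with $g(w)=p$, note that $g$ carries a small cone chart at $w$ to an embedded open cone at $p$ inside $V$, and then invoke an ``Alexander-type isotopy-extension argument in the region between the embedded cone and the frontier of $V$''. But with no control on $g$, the embedded cone $g(W)$ sits inside $V$ in a completely uncontrolled way: there is no reason for the region $\mathrm{cl}(V)\setminus g(W)$ to carry any product or collar structure $L_w\times[a,b]$, so there is nothing for an Alexander argument to grip. The paper resolves this by reversing the order of the two ingredients. It applies Effros (Theorem~2 of [1]) \emph{first}, obtaining micro-homogeneity, so that for the point in question one can choose a homeomorphism $h$ within a prescribed $\epsilon$ of the identity carrying the vertex where it is needed. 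With $\epsilon$ small enough the cone charts $\phi$ and $h\circ\phi$ are \emph{$2$-interlaced} (each of three nested sublevels of one contains the next sublevel of the other), and the paper's Lemma~1 then builds the desired supported homeomorphism by an explicit Eilenberg--Mazur swindle that telescopes the two interlaced cone structures into each other. In short: Effros supplies the control that makes the swindle possible, and the swindle is what replaces your Alexander step. You invoke Effros only afterwards, to upgrade transitivity of $\mathcal H_F(X)$ to micro-transitivity; but by that stage the hard work is supposed to be finished, and it has not been.

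A smaller point: your argument that links are connected rests on the claim that a homogeneous connected locally connected separable metric space in which every point locally separates must be a $1$-manifold. As stated this is not a standard theorem; a local-ends comparison between the vertex (modelled on $c(L)$) and non-vertex points (modelled on $L\times\mathbb R$) can be made to give what you need, but it uses local conicality essentially, not merely local connectedness. The paper bypasses this entirely: its Lemma~3 does not require the link to be connected, only to have at least three points, and it reroutes a path around a bad point by applying the already-proved strong local homogeneity (Lemma~2) inside a cone neighbourhood.
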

\noindent This theorem has the following two consequences.
\begin{Corollary 1} 
If $X$ is a homogeneous compact suspension, then $X$ is an absolute suspension (i.e., for any two distinct points $p$ and $q$ of $X$, there is a homeomorphism from $X$ to a suspension that maps $p$ and $q$ to the suspension points).
\end{Corollary 1}
\begin{Corollary 2}
If there exists a locally conical counterexample $X$ to the Bing-Borsuk Conjecture (i.e., $X$ is a locally conical homogeneous Euclidean neighborhood retract that is not a manifold), then each component of $X$ is strongly $n$-homogeneous for all $n \geq 2$ and $X$ is countable dense homogeneous.
\end{Corollary 2}
\end{abstract}

\maketitle

\section{Introduction}

We first provide definitions of the terms used in the abstract.

\begin{definition}
A space is \emph{locally conical} if every point of the space has an open neighborhood that is homeomorphic to the open cone over a compact space.
\end{definition}

\begin{definition}
A space $X$ is \emph{homogeneous} if for any two points $p$ and $q$ of $X$, there is a homeomorphism of $X$ that maps $p$ to $q$.  More generally, for $n \geq 1$, a space $X$ is \emph{strongly $n$-homogeneous} if every bijection between two $n$-element subsets of $X$ can be extended to a homeomorphism of $X$.  Thus, a space is homogeneous if and only if it is strongly $1$-homogeneous.
\end{definition}

\begin{definition}
A space $X$ is \emph{countable dense homogeneous} if for any two countable dense subsets $A$ and $B$ of $X$, there is a homeomorphism of $X$ that maps $A$ onto $B$.
\end{definition}

\indent The Theorem (stated in the abstract) asserts that every homogeneous locally conical connected separable metric space that is not a $1$-manifold is strongly $n$-homogeneous for each $n \geq 2$.  The hypothesis of this theorem must exclude $1$-manifolds because the connected $1$-manifolds, $\mathbb{R}$ and $S^1$, while homogeneous, fail to be strongly $3$-homogeneous and strongly $4$-homogeneous, respectively.

\indent The pseudo-arc as well as the space $S^1 \times \mu^1$, where $\mu^1$ is the $1$-dimensional Menger universal curve, provide evidence that the ``locally conical'' hypothesis or something similar is necessary.  Neither of these spaces is locally conical, both are homogeneous, but neither is strongly $2$-homogeneous.  The pseudo-arc is not strongly $2$-homogeneous because it has uncountably many pairwise disjoint composants (\cite{18}, Theorems 11.15 and 11.17, pages 203-204) and two points in the same composant can't be mapped by a homeomorphism to two points in different composants.   See \cite{13} for a proof that $S^1 \times \mu^1$ is not strongly $2$-homogeneous.  Apparently, for $n \geq 4$, it is not known whether there exists a space that is strongly $n$-homogeneous but not strongly $(n+1)$-homogeneous.

\begin{definition}
A compact space $X$ is an \emph{absolute suspension} if for any two distinct points $p$ and $q$ of $X$, there is a homeomorphism from $X$ to a suspension that maps $p$ and $q$ to the suspension points.
\end{definition}

\indent The concept of an absolute suspension originated in the paper \cite{11}.  In that paper, de Groot conjectured that for $n \geq 1$, every $n$-dimensional absolute suspension is homeomorphic to the $n$-sphere.  This conjecture is known to be true for $n \leq 3$ (\cite{20} and \cite{17}) and is currently unresolved for $n \geq 4$.  See \cite{2} for further information about absolute suspensions.  Since the suspension points of a suspension can be interchanged by an obvious homeomorphism, then every absolute suspension is clearly homogeneous.  It is natural to ask whether the converse of this statement is true: is every homogeneous suspension an absolute suspension?  According to Corollary 1, the answer is ``yes''.  Observe that proving Corollary 1 is equivalent to proving that every homogeneous suspension is strongly $2$-homogeneous.  Notice that the Theorem actually implies more: it implies that every homogeneous suspension is strongly $n$-homogeneous for each $n \geq 2$ and countable dense homogeneous.

\begin{definition}
A space is a \emph{Euclidean neighborhood retract} if it is homeomorphic to a retract of an open subset of $\mathbb{R}^n$ for some $n \geq 1$.
\end{definition}

\indent The \textit{Bing-Borsuk Conjecture} originated in the paper \cite{6} and asserts that every homogeneous Euclidean neighborhood retract is a topological manifold without boundary.  It is conceivable that the methods of the authors of \cite{8} might yield a locally conical homogeneous Euclidean neighborhood retract that is a homology manifold but not a manifold.  If such a space exists, then Corollary $2$ applies to it.

\indent It is clear that Corollaries $1$ and $2$ follow from the Theorem.

\section{Locally conical spaces}

\indent A locally conical space is one that is covered by open cone neighborhoods.  We now elaborate on this concept and prove three lemmas that are the necessary ingredients for our proof of the Theorem.

\begin{definition}
Let $U$ be an open subset of a metric space $X$.  A \emph{cone chart for $U$} is a proper map $\phi : Y \times (0,\infty] \rightarrow U$ such that 
\begin{itemize}
	\item $Y$ is a compact metric space,
	\item $\phi(Y \times \{ \infty \}) = \{p\})$ for some point $p \in U$ and
	\item $\phi$ maps $Y \times (0,\infty)$ homeomorphically onto $U - \{p\}$.
\end{itemize}
\end{definition}
\noindent Equivalently, $\phi : Y \times (0,\infty] \rightarrow U$ is a cone chart for $U$ if $Y$ is a compact metric space and $\phi$ induces a homeomorphism from the quotient space $Y \times (0,\infty]/Y \times \{ \infty \}$ onto $U$.  In this situation, we call $p$ the \emph{vertex} of $\phi$ and we call $U$ an \emph{open cone neighborhood}.  (The properness of the map $\phi : Y \times (0,\infty] \rightarrow U$ makes $\phi$ a closed map.  This prevents a sequence in $Y \times (0,\infty]$ that converges to $Y \times \{0\}$ from having an image in $U$ that converges to $p$.)

\indent The following notion is central to the subsequent proofs.

\begin{definition}
Let $\phi : Y \times (0,\infty] \rightarrow U$ and $\psi : Z \times (0,\infty] \rightarrow V$ be cone charts for open subsets $U$ and $V$ of a metric space $X$.  $\phi$ and $\psi$ are \emph{$2$-interlaced} if
\begin{itemize}
	\item $\phi(Y \times (1,\infty]) \supset \psi(Z \times [2,\infty])$,
	\item $\psi(Z \times (2,\infty]) \supset \phi(Y \times [3,\infty])$ and
	\item $\phi(Y \times (3,\infty]) \supset \psi(Z \times [4,\infty])$.
\end{itemize}
\end{definition}
\noindent (See Figure $1$.)  Observe that if $\phi$ and $\psi$ are $2$-interlaced, then the vertices of $\phi$ and $\psi$ both lie in $U \cup V$.

\begin{figure}[h]
\includegraphics{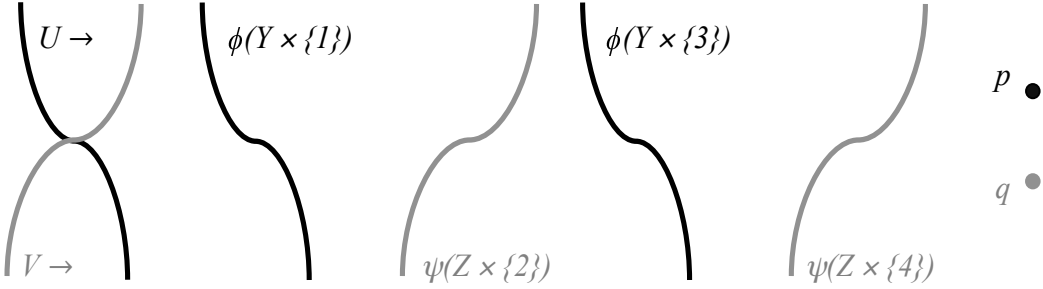}
\caption{$\phi$ and $\psi$ are $2$-interlaced cone charts}
\end{figure}

\indent Before formally stating the three lemmas, we paraphrase them and sketch how they lead to a proof of the Theorem.  Lemma $1$ says that, given two $2$-interlaced cone charts, then there is a homeomorphism supported on the intersection of the images of the two cone charts that moves the vertex of one cone chart to the vertex of the other.  Lemma $2$ says that if $U$ is an open cone neighborhood in a homogeneous space $X$, then for any two points $x$ and $y$ of $U$, there is a homeomorphism of $X$ supported on $U$, that moves $x$ to $y$.  Lemma $2$ follows from Lemma $1$ and a theorem of E. G. Effros \cite{12}.  Lemma $3$ says that a homogeneous locally conical connected space that is not a $1$-manifold is not separated by any finite subset.  To prove the strong $n$-homogeneity conclusion of the Theorem, we proceed by induction on $n$.  Assume $X$ is a strongly $n$-homogeneous locally conical connected separable metric space that is not a $1$-manifold.  Given two $(n+1)$-element subsets $\{ p_1, \dots , p_{n+1} \}$ and $\{ q_1, \dots , q_{n+1} \}$ of $X$, the inductive hypothesis allows us to assume $p_i = q_i$ for $1 \leq i \leq n$.  Use Lemma $3$ to join $p_{n+1}$ to $q_{n+1}$ by a path in $X$ that avoids $\{ q_1, \dots , q_n \}$.  Cover this path by a finite chain $U_1, \dots , U_k$ of open cone neighborhoods, and use Lemma $2$ to obtain a sequence of homeomorphisms $h_1, \dots , h_k$ of $X$ such that each $h_i$ is supported on $U_i$, and the composition $h_k \circ \dots \circ h_1$ moves $p_{n+1}$ through the $U_i$'s to $q_{n+1}$.  This proves $X$ is strongly $(n+1)$-homogeneous.  To prove $X$ is countable dense homogeneous, consider two countable dense subsets $A$ and $B$ of $X$.  The rough outline of this argument is to use Lemma $2$ to obtain a sequence $g_1, g_2, \dots$ of homeomorphisms of $X$ that are supported on progressively smaller open cone neighborhoods so that the finite compositions $g_k \circ \dots \circ g_1$ move progressively larger finite subsets of $A$ into $B$ while the inverses $(g_1 \circ \dots \circ g_k)^{-1}$ move progressively larger finite subsets of $B$ into $A$.  Care must be taken to insure that the compositions $g_k \circ \dots \circ g_1$ converge to a homeomorphism of $X$.  Specifically, the supports of the $g_i$'s must be chosen so small that the sequence of compositions $g_k \circ \dots \circ g_1$ forms a Cauchy sequence with respect to a complete metric on the homeomorphism group of $X$.

\indent The fundamental property of $2$-interlaced cone charts is expressed by the following lemma.

\begin{lemma}
If $\phi : Y \times (0,\infty] \rightarrow U$ and $\psi : Z \times (0,\infty] \rightarrow V$ are $2$-interlaced cone charts for open subsets $U$ and $V$ of a metric space $X$ with vertices $p$ and $q$, respectively, then there is a homeomorphism of $X$ that maps $p$ to $q$ and is supported on $U \cup V$.\footnote{We know two different proofs of this lemma.  The one we give here is related in spirit to the proofs in \cite{16}, \cite{10}, \cite{19} and \cite{9}. Those proofs employ a logical gambit known as the \textit{Eilenberg swindle} or the \textit{Eilenberg-Mazur swindle} which manipulates even and odd terms in apparently different infinite unions, exploiting associative and commutative properties, to prove the unions are homeomorphic.  Our proof uses similar ideas, although the swindle is not explicitly visible.  The second proof of this lemma is closely related to ideas in \cite{7}.  (Also see \cite{14} and \cite{15}.)  We outline it in an appendix.}
\end{lemma}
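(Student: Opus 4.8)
The plan is to obtain the required homeomorphism as a convergent infinite product of ``radial pushes'' alternating between the two charts. First I would fix notation: for $c>0$ put $A_c=\phi(Y\times[c,\infty])$, $A_c^\circ=\phi(Y\times(c,\infty])$, $B_c=\psi(Z\times[c,\infty])$, $B_c^\circ=\psi(Z\times(c,\infty])$; these are closed, resp.\ open, cone neighborhoods of $p$ (the $A$'s) and of $q$ (the $B$'s), and $A_c^\circ,B_c^\circ$ are genuinely open in $X$ because $\phi$ and $\psi$ are closed maps. The $2$-interlacing hypothesis is then the chain
\[
B_4\ \subset\ A_3^\circ\ \subset\ A_3\ \subset\ B_2^\circ\ \subset\ B_2\ \subset\ A_1^\circ\ \subset\ U ,
\]
together with $B_2\subset V$, so that everything from $B_2$ inward lies in $U\cap V$, while $p\in A_3^\circ$ and $q\in B_4^\circ$. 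If $p=q$ the identity map works, so assume $p\neq q$. The elementary move is a \emph{radial push}: if $\sigma$ is an increasing self-homeomorphism of $(0,\infty]$ that is the identity on $(0,\varepsilon]$, then the map equal to $\phi\circ(\mathrm{id}_Y\times\sigma)\circ\phi^{-1}$ on $U$ and to the identity on $X\setminus U$ is a homeomorphism of $X$, supported in $A_\varepsilon^\circ$ and fixing $p$; similarly in the $\psi$-chart.

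The core of the argument is an infinite iteration, and one first sees why a single push cannot suffice: a composition of radial pushes in one fixed chart is again a radial push of that chart, hence fixes the chart's vertex and never carries a non-vertex point to it, and an infinite composition of such pushes that tried to send $p$ to $q$ would have a non-injective limit. So the two charts must be used alternately and productively. I would construct homeomorphisms $\rho_1,\rho_2,\dots$ of $X$, each a radial push and each supported in the fixed compact set $B_2\subset U\cap V$, with partial compositions $h_n=\rho_n\circ\cdots\circ\rho_1$, arranged so that: (i) $h_n(p)\to q$; (ii) $\rho_{n+1}$ is a radial push in whichever of the updated charts $h_n\circ\phi$, $h_n\circ\psi$ does \emph{not} have $h_n(p)$ as its vertex, chosen to carry $h_n(p)$ a definite fraction of the remaining distance toward $q$; (iii) after a renormalization of the level parameter, the two updated charts are again $2$-interlaced, so that step (ii) remains legitimate at the next stage; and (iv) $\mathrm{supp}\,\rho_{n+1}$ has $\rho$-diameter at most $2^{-n}$, where $\rho$ is a fixed bounded metric on $X$. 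The engine that makes (ii) and (iii) compatible is the self-similarity of the open cone: a radial push identifies the open cone with itself after rescaling the vertex neighborhoods, so dragging the moving point deeper into one chart ``regenerates'', one level further in, a copy of the finite interlaced configuration needed for the next push.

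To produce an actual homeomorphism from the infinite product, I would work in the group $\mathcal{H}$ of homeomorphisms of $X$ supported in $B_2$, equipped with the complete metric $d(f,g)=\sup_x\rho(f(x),g(x))+\sup_x\rho(f^{-1}(x),g^{-1}(x))$. Condition (iv) gives $\sup_x\rho(h_n(x),h_{n-1}(x))\le 2^{-(n-1)}$ directly; obtaining the matching bound for the inverses requires, at stage $n$, further shrinking $\mathrm{supp}\,\rho_n$ so that its image under the already-determined $h_{n-1}^{-1}$ is also small. Then $(h_n)$ is Cauchy in $\mathcal{H}$, its limit $H$ is a homeomorphism of $X$ supported in $B_2\subset U\cup V$, and $H(p)=\lim_n h_n(p)=q$.

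The hard part will be the bookkeeping behind (ii)--(iii): each radial push must simultaneously make quantitative progress of the moving point toward $q$, leave a genuinely $2$-interlaced pair of cone charts so that the iteration does not stall, and respect (iv). Reconciling these is precisely where the Eilenberg-Mazur ``swindle'' content lives --- it is the infinite alternation between the two cone structures, glued together along their interlacing, that yields a homeomorphism no finite manipulation could produce. (A second, differently organized proof, along the lines of \cite{7}, is indicated in the appendix.)
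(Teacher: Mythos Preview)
Your approach is genuinely different from both of the paper's arguments, but as written it has a real gap. The paper's main proof involves no limit at all: it partitions $\phi(Y\times[1,\infty])$ into explicit annular pieces $A_n,B_n$ (resp.\ $C_n,D_n$ in the $\psi$-chart) generated by shift maps $S(\phi(y,t))=\phi(y,t+2)$ and $T(\psi(z,t))=\psi(z,t+2)$, builds one auxiliary homeomorphism $\alpha:A_0\to D_1$ with prescribed boundary values, and then \emph{defines} $h$ piecewise by closed formulas $h_n^B=T^{(n-1)}\circ(S^{(n-1)})^{-1}$ on $B_n$ and $h_n^A=T^{(n-1)}\circ\alpha\circ(S^{(n)})^{-1}$ on $A_n$, checking the pieces match on overlaps. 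The appendix's alternate proof also avoids convergence in $\mathcal H$: it iteratively modifies $\phi$ (Lemma~4: a carefully designed composite $\gamma\circ\beta\circ\alpha$ of two $\phi$-radial maps and one $\psi$-radial map promotes $k$-interlacing to $(k+1)$-interlacing), takes the direct-limit chart $\chi$ with vertex $q$ (Lemma~5), and sets $h=\chi\circ\phi^{-1}$.

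The gap in your plan is the alternation rule in (ii). The chart among $h_n\circ\phi,\ h_n\circ\psi$ that does \emph{not} have $h_n(p)$ as its vertex is always $h_n\circ\psi$ (its vertex is $h_n(q)$), so your prescription never alternates and collapses to a sequence of $\psi$-type radial pushes --- precisely the scenario you already ruled out. Concretely, every such push fixes the $\psi$-vertex, so $h_n(q)=q$ for all $n$ while $h_n(p)\to q$; then $(h_n^{-1})$ cannot be Cauchy (points $h_n(p)$ near $q$ are sent back to $p$), and the limit is not injective, regardless of how small you make the supports. What must actually happen is that some of the pushes are in the updated $\phi$-chart, fixing the moving point and instead reshaping $\phi$-levels so that a fresh interlaced configuration appears one scale deeper. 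That is exactly the content of your step (iii), and it is not a formal consequence of the self-similarity of either cone separately: one needs, at each stage, a specific composite of $\phi$- and $\psi$-radial maps that promotes the interlacing. This is precisely Lemma~4 of the paper, and without it (or an equivalent) the ``bookkeeping'' you defer cannot be completed; once you supply it, your limit-in-$\mathcal H$ framework would essentially repackage the appendix proof.
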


\begin{proof}
Let $A_0 = \phi(Y \times [1,\infty]) - \psi(Z \times (2,\infty])$, $B_1 = C_1 = \psi(Z \times [2,\infty]) - \phi(Y \times (3,\infty])$ and $D_1 = \phi(Y \times [3,\infty]) - \psi(Z \times (4,\infty])$.  (See Figure 2.)

\begin{figure}[h]
\includegraphics{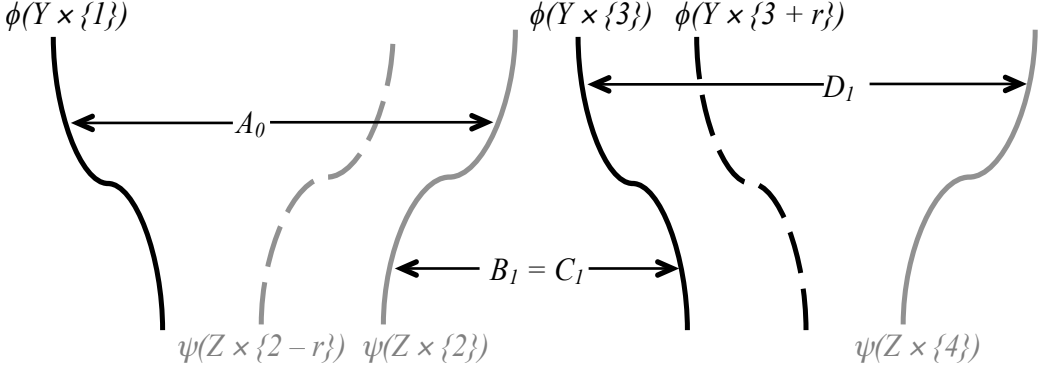}
\caption{$A_0$, $B_1$, $C_1$ and $D_1$}
\end{figure}

\indent Define \textit{shift maps} $S : U \rightarrow U$ and $T : V \rightarrow V$ by $S(\phi(y,t)) = \phi(y,t+2)$ for $(y,t) \in Y \times (0,\infty)$, $S(p) = p$, $T(\psi(z,t)) = \psi(z,t+2)$ for $(z,t) \in Z \times (0,\infty)$ and $T(q) = q$. For $n \geq 1$, define $A_n = S^{(n)}(A_0)$, $B_n = S^{(n-1)}(B_1)$, $C_n = T^{(n-1)}(C_1)$ and $D_n = T^{(n-1)}(D_1)$.  (Here, if $f$ is a function and $n \geq 1$, let $f^{(n)}$ denote the $n$-fold composition of $f$ and let $f^{(0)} = id$.)  Observe that 
\begin{align*}
		&U  =  \phi(Y \times (0,1]) \cup A_0 \cup (\bigcup_{n \geq 1}(B_n \cup A_n)) \cup \{p\},\\
		&V  =  \psi(Y \times (0,2]) \cup (\bigcup_{n \geq 1} (C_n \cup D_n)) \cup \{q\}
\intertext{and}
		&(X - U)\cup\phi(Y \times (0,1] \cup A_0 = (X - V)\cup\psi(Y \times (0,2]).
\end{align*}
\noindent We will construct a homeomorphism $h : X \rightarrow X$ which is the identity on $(X - U)\cup\phi(Y \times (0,1])\cup A_0$ and for $n \geq 1$ maps $B_n$ to $C_n$ and $A_n$ to $D_n$, and maps $p$ to $q$.

\indent The first step in the construction of $h$ is to construct a homeomorphism $\alpha : A_0 \rightarrow D_1$ such that $\alpha = S$ on $\phi(Y \times \{1\})$ and $\alpha = T$ on $\psi(Z \times \{2\})$. $\alpha$ is defined to be the composition of the homeomorphisms $\beta : A_0 \rightarrow A_0 \cup B_1 \cup D_1$ and $\gamma : A_0 \cup B_1 \cup D_1 \rightarrow D_1$ satisfying $\beta = id$ on $\phi(Y \times \{1\})$, $\beta = T$ on $\psi(Z \times \{2\})$, $\gamma = S$ on $\phi(Y \times \{1\})$ and $\gamma = id$ on $\psi(Z \times \{4\})$.  To obtain $\beta$ and $\gamma$, first choose $r > 0$ so that $\phi(Y \times (1,\infty]) \supset \psi(Z \times [2 - r,\infty])$ and $\phi(Y \times (3 + r,\infty]) \supset \psi(Z \times [4,\infty])$. Let $\lambda : [2 - r,2] \rightarrow [2 - r,4]$ and $\mu : [1,3 + r] \rightarrow [3,3 + r]$ be orientation preserving homeomorphisms.  Let $\beta = id$ on $\phi(Y \times [1,\infty]) - \psi(Z \times (2 - r,\infty])$ and $\beta(\psi(z,t)) = \psi(z,\lambda(t))$ for $(z,t) \in Z \times [2 - r,2]$. Thus, $\beta = T$ on $\psi(Z \times \{2\})$. Let $\gamma(\phi(y,t)) = \phi(y,\mu(t))$ for $(y,t) \in Y \times [1,3 + r]$ and $\gamma = id$ on $\phi(Y \times [3 + r,\infty]) - \psi(Z \times (4,\infty])$. Thus, $\gamma = S$ on $\phi(Y \times \{1\})$. Finally let $\alpha = \gamma\circ\beta$.  (See Figures 3\textsc{a} and 3\textsc{b}.)

\renewcommand\thefigure{3a}
\begin{figure}[h]
\includegraphics{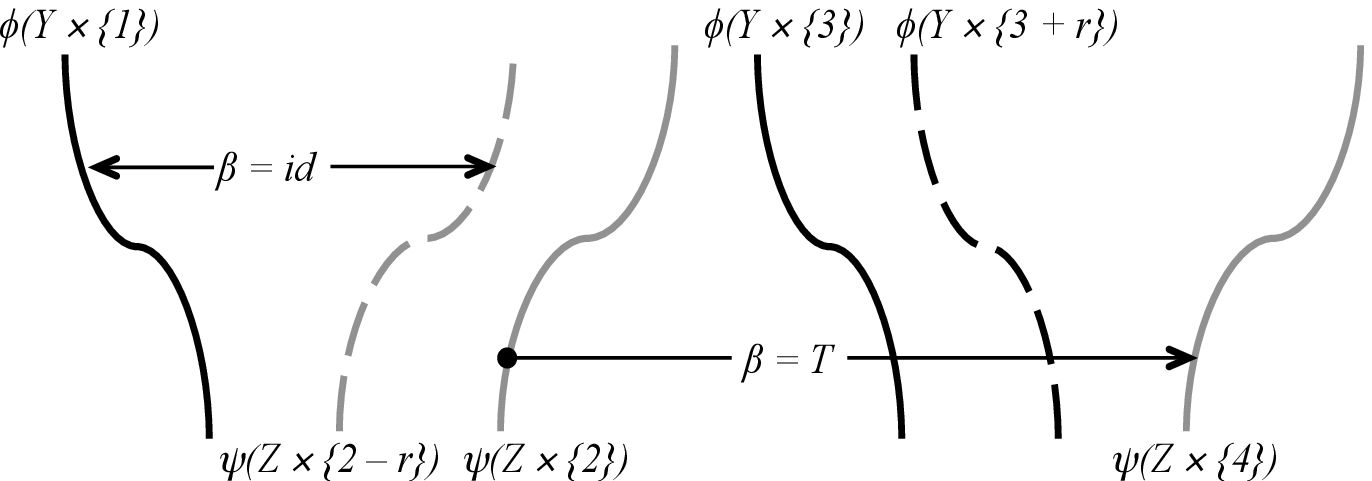}
\caption{$\beta$}
\end{figure}

\renewcommand\thefigure{3b}
\begin{figure}[h]
\includegraphics{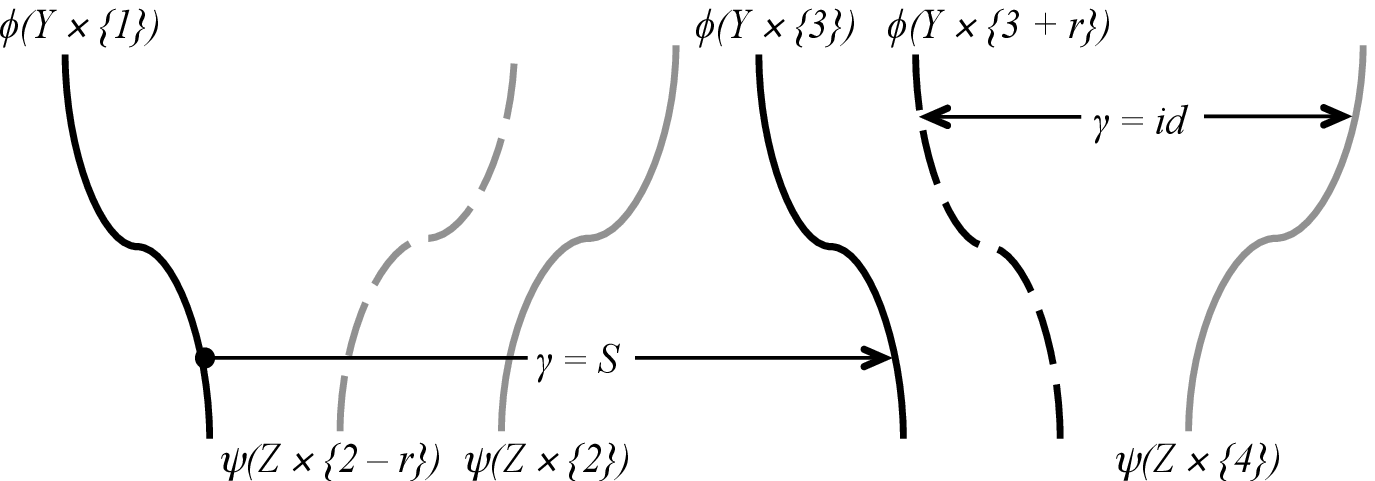}
\caption{$\gamma$}
\end{figure}

\indent The homeomorphism $h$ will be constructed as the union of constituent homeomorphisms between ``pieces'' of $X$.  The constituent homeomorphisms are defined as follows.  $h_0 = id$ on $(X - U) \cup \phi(Y \times (0,1]) \cup A_0 = (X - V) \cup \psi(Z \times (0,2])$. For $n \geq 1$, the homeomorphism $h_n^B : B_n \rightarrow C_n$ is defined by $h_n^B = T^{(n - 1)}\circ(S^{(n - 1)})^{-1} | B_n$ and the homeomorphism $h_n^A : A_n \rightarrow D_n$ is defined by $h_n^A = T^{(n - 1)}\circ\alpha\circ(S^{(n)})^{-1} | A_n$.  (Thus, $h_1^B = id$ on $B_1$ and $h_1^A = \alpha\circ S^{-1}$ on $A_1$). Finally $h_{\infty} : \{p\} \rightarrow \{q\}$.  Now define $h : X \rightarrow X$ as
\begin{center}
$h = h_0 \cup ( h_1^B \cup h_1^A \cup h_2^B \cup h_2^A \cup \dots ) \cup h_\infty$.
\end{center} 
(See Figure 4.)

\renewcommand\thefigure{4}
\begin{figure}[h]
\includegraphics{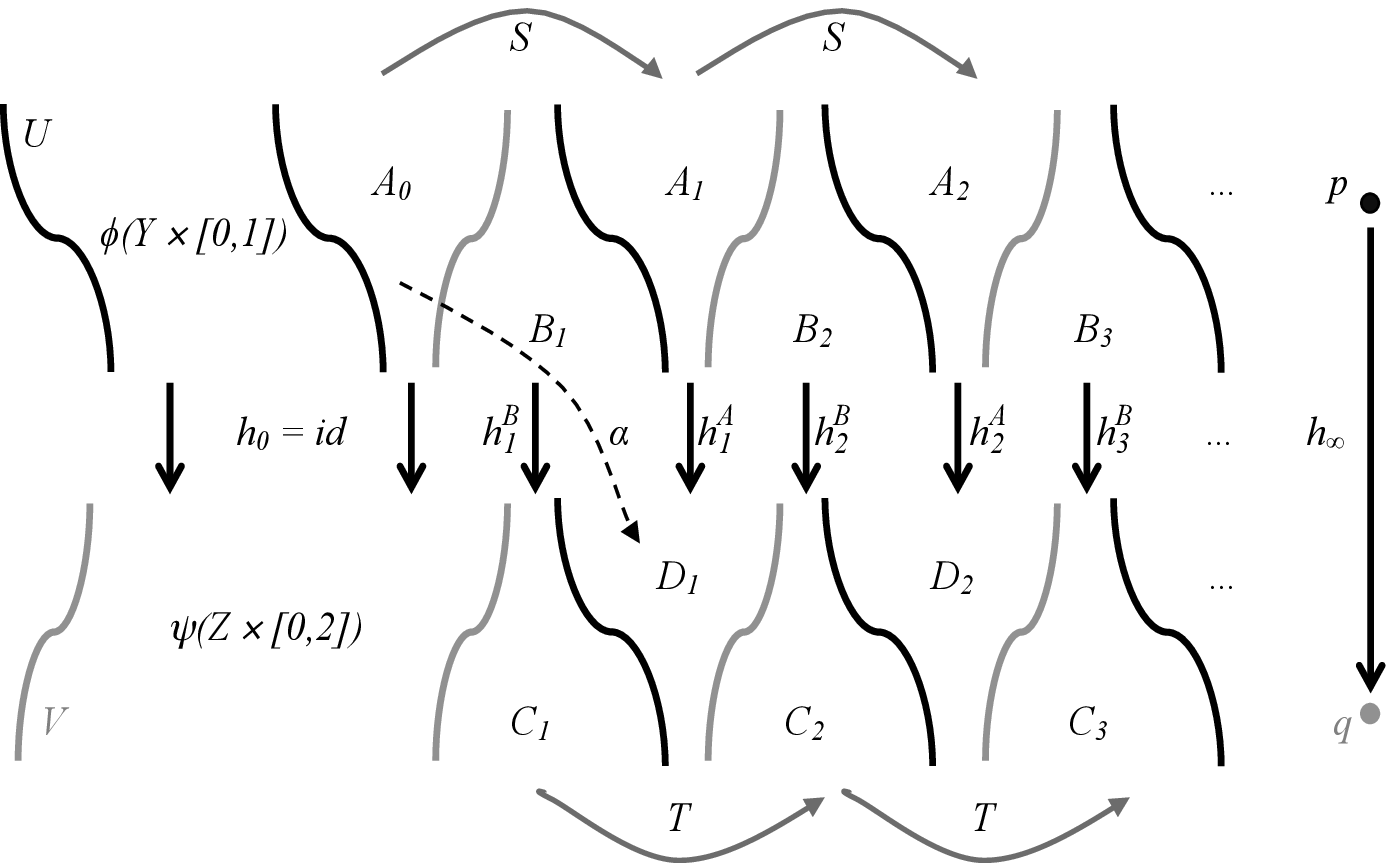}
\caption{$h = h_0 \cup ( h_1^B \cup h_1^A \cup h_2^B \cup h_2^A \cup \dots ) \cup h_\infty$}
\end{figure}

\indent To prove that $h$ is well-defined, we must show that its constituents agree wherever their domains overlap.  To this end, observe that since $h_0 = id$ and $h_1^B = id$, then these functions agree on $((X - U) \cup \phi(Y \times (0,1]) \cup A_0) \cap B_1 = \psi(Z \times \{2\})$.  Let $n \geq 1$. Note that $B_n \cap A_n = S^{(n - 1)}(B_1 \cap S(A_0)) =  S^{(n - 1)}(\phi(Y \times \{3\})) = S^{(n)}(\phi(Y \times \{1\}))$.  Hence, 
\begin{align*}
	&h_n^B | B_n \cap A_n = T^{(n - 1)}\circ(S^{(n - 1)})^{-1} | S^{(n)}(\phi(Y \times \{1\}))
\intertext{and}
	&h_n^A | B_n \cap A_n = T^{(n - 1)}\circ\alpha\circ(S^{(n)})^{-1} | S^{(n)}(\phi(Y \times \{1\})) =\\
	&T^{(n - 1)}\circ(\alpha | \phi(Y \times \{1\}))\circ(S^{(n)})^{-1} | S^{(n)}(\phi(Y \times \{1\})) =\\
	&T^{(n - 1)}\circ S\circ(S^{(n)})^{-1} | S^{(n)}(\phi(Y \times \{1\})) =\\
	&T^{(n - 1)}\circ(S^{(n - 1)})^{-1} | S^{(n)}(\phi(Y \times \{1\}))
\end{align*}
Thus, $h_n^B =h^A_n$ on $B_n \cap A_n$. Also note that $A_n \cap B_{n + 1} = S^{(n)}(A_0 \cap B_1) =  S^{(n)}(\psi(Z \times \{2\}))$. Hence,
\begin{align*}
	&h_n^A | A_n \cap B_{n + 1} = T^{(n - 1)}\circ\alpha\circ(S^{(n)})^{-1} | S^{(n)}(\psi(Z \times \{2\})) =\\
	&T^{(n - 1)}\circ(\alpha | \psi(Z \times \{2\}))\circ(S^{(n)})^{-1} | S^{(n)}(\psi(Z \times \{2\})) =\\
	&T^{(n - 1)}\circ T\circ(S^{(n)})^{-1} | S^{(n)}(\psi(Z \times \{2\})) =\\
	&T^{(n)}\circ(S^{(n)})^{-1} | S^{(n)}(\psi(Z \times \{2\}))
\intertext{and}
	&h_{n + 1}^B | A_n \cap B_{n + 1} = T^{(n)}\circ(S^{(n)})^{-1} | S^{(n)}(\psi(Z \times \{2\}))
\end{align*}
Thus, $h_n^A =h^B_{n + 1}$ on $A_n \cap B_{n + 1}$.  We conclude that $h$ is well defined.

\indent Since $\{ (X - U) \cup \phi(Y \times (0,1]) \cup A_0 \} \cup \{ B_n : n \geq 1 \} \cup \{ A_n : n \geq 1 \}$ is a locally finite closed cover of $X - \{p\}$ and the restriction of $h$ to each element of this cover is continuous, then $h$ is continuous at every point of $X - \{p\}$.  The collections $\{ (\bigcup_{n \geq k}(B_n \cup A_n)) \cup \{p\} : k \geq 1 \}$ and $\{ (\bigcup_{n \geq k}(C_n \cup D_n)) \cup \{q\} : k \geq 1 \}$ are bases for the topology on $X$ at $p$ and $q$, respectively, and $h$ maps the elements of the first collection to the elements of the second collection.  Hence, $h$ is continuous at $p$.

\indent $h$ is a bijection and the constituents of $h$ are themselves homeomorphisms.  
Hence, $h^{-1}$ exists and is given by the formula
\begin{center}
$h^{-1} = (h_0)^{-1} \cup ( (h_1^B)^{-1} \cup (h_1^A)^{-1} \cup (h_2^B)^{-1} \cup (h_2^A)^{-1}  \cup \dots ) \cup (h_\infty)^{-1}$.
\end{center}
Reasoning as before, we see that $h^{-1}$ is well-defined and continuous.  Thus, $h : X \rightarrow X$ is a homeomorphism.  Clearly, $h$ maps $p$ to $q$ and is supported on $U \cap V$. 
\end{proof}

\begin{lemma}
If $U$ is an open cone neighborhood in a homogeneous separable metric space $X$, and $x$ and $y \in U$, then there is a homeomorphism of $X$ that maps $x$ to $y$ and is supported on $U$.
\end{lemma}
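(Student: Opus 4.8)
The plan is to derive this lemma from Lemma 1 and the open mapping theorem of Effros. Let $\mathcal{H}_U(X)$ denote the group of all homeomorphisms of $X$ that restrict to the identity on $X \setminus U$; these are exactly the homeomorphisms of $X$ supported on $U$, and they are the maps that Lemma 1 produces. The conclusion of the lemma is precisely that $\mathcal{H}_U(X)$ acts transitively on $U$. Its orbits partition $U$, and $U$ — being homeomorphic to an open cone over a compact space — is connected, so it suffices to show that every orbit is open in $U$; equivalently, that for each $z \in U$ the orbit of $z$ under $\mathcal{H}_U(X)$ is a neighbourhood of $z$.

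I would first record two standing facts. Since $U$ admits a cone chart, its vertex has an open cone neighbourhood, so by homogeneity every point of $X$ has one; hence $X$ is locally conical, therefore locally compact, and, being separable metric, Polish. Consequently $\mathcal{H}(X)$, with its natural topology, is a Polish group acting continuously and transitively on $X$, so Effros' theorem applies: for each $z \in X$ the evaluation map $g \mapsto g(z)$ is open. Concretely, fixing a bounded complete metric $\rho$ on $X$: for every $\delta > 0$ there is $\epsilon > 0$ such that whenever $\rho(w,z) < \epsilon$ there is a homeomorphism $g_w$ of $X$ with $g_w(z) = w$ for which $g_w$ and $g_w^{-1}$ move every point of $X$ a distance less than $\delta$. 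Secondly, from the properness built into the definition of a cone chart, every point of $X$ has cone neighbourhoods with compact closure contained in any prescribed neighbourhood of the point.

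Now fix $z \in U$. Choose a cone chart $\omega \colon Z \times (0,\infty] \to \Omega$ with vertex $z$ and with $\overline{\Omega}$ a compact subset of $U$. For $w$ close to $z$, let $g_w$ be the near-identity homeomorphism provided by Effros' theorem (with $\delta$ to be specified); then $g_w \circ \omega$ is a cone chart with vertex $w$ whose image $g_w(\Omega)$ again lies in $U$. The idea is to rescale the parameter in each of $\omega$ and $g_w \circ \omega$ — replacing $t$ by $\tau_1(t)$ and by $\tau_2(t)$ for suitable orientation-preserving homeomorphisms of $(0,\infty]$ — so that the two resulting cone charts become $2$-interlaced. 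The three interlacing inclusions assemble into a finite nested chain of cone neighbourhoods alternately centred at $z$ and at $w$. The chain built from $\omega$ alone is manifestly nested, and because the relevant cone levels of $\omega$ are compact and mutually at positive distance, choosing $\delta$ smaller than the gaps between them guarantees that inserting $g_w$ does not disturb the nesting; the same smallness keeps all the rescaled images inside $U$. Lemma 1 then yields a homeomorphism of $X$, supported on the union of the images of the two rescaled charts and hence on $U$, that carries the vertex $z$ to the vertex $w$. Thus some ball about $z$ is contained in the $\mathcal{H}_U(X)$-orbit of $z$, which is what the reduction demanded.

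The main obstacle is exactly the $2$-interlacing hypothesis of Lemma 1. One cannot simply interlace the given cone chart $\phi$ at the vertex $p$ of $U$ with a cone chart at an arbitrary target point $y \in U$, because $2$-interlacing forces some cone neighbourhood of $y$ to contain $p$, and this is impossible when $y$ lies near the frontier of $U$. The way around this is to never interlace across a long distance: one localises the problem to the statement that the orbits are open (using connectedness of $U$ to globalise afterward), and at the local level one interlaces only two nearly-coincident cone charts, at $z$ and at a nearby point $w$, where Effros' theorem makes the transition homeomorphism $g_w$ so close to the identity that the interlacing inclusions hold for free. Once this structure is in place, verifying that the rescaled images remain in $U$ and keeping track of the reparametrizations $\tau_i$ are routine.
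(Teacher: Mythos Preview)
Your proof is correct and rests on the same mechanism as the paper's: Effros' theorem supplies a near-identity homeomorphism $g$, the charts $\omega$ and $g\circ\omega$ are then $2$-interlaced, and Lemma~1 swaps their vertices by a homeomorphism supported inside $U$. Only the outer packaging differs. The paper works exclusively at the vertex $p$ of the given chart $\phi$: it first slides an arbitrary $x\in U$ radially toward $p$ along the cone structure (a homeomorphism of $X$ supported on $U$, read off directly from $\phi$) until the image lands in the Effros neighborhood of $p$, and then applies the mechanism once. You instead show that every $\mathcal{H}_U(X)$-orbit is open---manufacturing, via homogeneity, a small cone chart with vertex each $z\in U$ and applying the mechanism there---and then invoke connectedness of $U$. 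The paper's route is marginally shorter, since it never re-invokes homogeneity to produce cone charts away from $p$; your route has the virtue of making the \emph{strong local homogeneity} of $X$ explicit, which is exactly the hypothesis the paper later quotes from \cite{4} to deduce countable dense homogeneity. (Incidentally, the separate reparametrizations $\tau_1,\tau_2$ are unnecessary: once $\delta$ is chosen smaller than the distances between the relevant level-sets of $\omega$, the charts $\omega$ and $g_w\circ\omega$ are already $2$-interlaced as they stand.)
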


\indent The proof of Lemma 2 relies on Lemma 1 and a consequence of a theorem of E. G. Effros \cite{12}. The precise statement of the version of Effros' theorem that we need here appears as Theorem 2 of \cite{1}.  We quote it:

\begin{Theorem 2 of [1]}
Suppose $X$ is a locally compact separable metric space and $\mathscr{H}(X)$ (the homeomorphism group of $X$) is endowed with the complemented compact-open topology.  If $X$ is homogeneous, then $X$ is micro-homogeneous.
\end{Theorem 2 of [1]}

\indent We now explain the less familiar terms in this assertion.  For subsets $A$ and $B$ of a space $X$, let $<$$A,B$$>$ $= \{ h \in \mathscr{H}(X) : h(A) \subset B \}$.  The \emph{complemented compact-open topology} on $\mathscr{H}(X)$ is the topology determined by the basis consisting of all finite intersections of sets that are either of the form $<$$C,U$$>$ or $<$$X - U,X - C$$>$  where $C$ is a compact subset of $X$ and $U$ is an open subset of $X$.  The complemented compact-open topology on $\mathscr{H}(X)$ is metrizable.  Moreover, if $\rho$ is the restriction to $X$ of a metric on the one-point compactification of $X$, then the supremum metric on $\mathscr{H}(X)$ determined by $\rho$ induces the complemented compact-open topology on $\mathscr{H}(X)$.  Saying that $X$ is \emph{micro-homogeneous} (with respect to the complemented compact-open topology on $\mathscr{H}(X)$) means that for every open subset $\mathscr{U}$ of $\mathscr{H}(X)$ and every $x \in X$, $\{ h(x) : h \in \mathscr{U} \}$ is an open subset of $X$.  (In other words, $X$ is micro-homogeneous if for each $x \in X$, the function $h \mapsto h(x) : \mathscr{H}(X) \rightarrow X$ is an open map.)  See section 5 of \cite{1} for details.

\begin{proof}[Proof of Lemma 2.]
Assume $X$ is a homogeneous separable metric space and $U$ is an open cone neighborhood in $X$.  Since $U$ is locally compact and $X$ is homogeneous, then $X$ is locally compact.  Thus, Theorem 2 of \cite{1} implies that $X$ is micro-homogeneous.

\indent Let $\phi : Y \times (0,\infty] \rightarrow U$ be a cone chart for $U$ with vertex $p$. It suffices to prove that every point of $U$ can be mapped to $p$ by a homeomorphism of $X$ that is supported on $U$.

\indent Let $\rho$ be the restriction to $X$ of a metric on the one-point compactification of $X$, and let $\sigma$ be the supremum metric on $\mathscr{H}(X)$ determined by $\rho$.  Let $\epsilon > 0$ be chosen so that $\epsilon$ is less than the distances (with respect to $\rho$) between the following three pairs of sets:

\begin{itemize}
	\item $\phi(Y \times [2,\infty])$ and $X - \phi(Y \times (1,\infty])$,
	\item $\phi(Y \times [3,\infty])$ and $X - \phi(Y \times (2,\infty])$ and
	\item $\phi(Y \times [4,\infty])$ and $X - \phi(Y \times (3,\infty])$.
\end{itemize}

\noindent ($\epsilon$ exists because these pairs of sets are disjoint and closed and the first set in each pair is compact.)  The virtue of this choice of $\epsilon$ is that if $h \in \mathscr{H}(X)$ and $\sigma(h,id_X) < \epsilon$, then $h\circ\phi : Y \times (0,\infty] \rightarrow h(U)$ is a cone chart for $h(U)$ such that $\phi$ and $h\circ\phi$ are $2$-interlaced.

\indent Let $\mathscr{U} = \{ h \in \mathscr{H}(X) : \sigma(h,id_X) < \epsilon \}$.  Then $\mathscr{U}$ is an open neighborhood of $id_X$ in $\mathscr{H}(X)$.  Since $X$ is micro-transitive, then $N = \{ h(p) : h \in \mathscr{U} \}$ is an open neighborhood of $p$ in $X$.

\indent Let $x \in U$.  Using the cone structure on $U$ induced by $\phi$, construct a homeomorphism $g : X \rightarrow X$ supported on $U$ that slides $x$ toward $p$ so that $g(x) \in N$.  Hence, there is an $h \in \mathscr{U}$ such that $h(p) = g(x)$.  Therefore, $\sigma(h,id_X) < \epsilon$.  Thus, $h\circ\phi : Y \times (0,\infty] \rightarrow h(U)$ is a cone chart for $h(U)$ with vertex $g(x)$ such that $\phi$ and $h\circ\phi$  are $2$-interlaced.  Lemma 1 now implies there is a homeomorphism $f : X \rightarrow X$ such that $f(p) = g(x)$ and $f$ is supported on $U \cap h(U)$.  Hence, $f^{-1}\circ g : X \rightarrow X$ is a homeomorphism that maps $x$ to $p$ and is supported on $U$.
\end{proof}

\begin{lemma}
If $X$ is a homogeneous locally conical connected separable metric space that is not a $1$-manifold, and $F$ is a finite subset of $X$, then $X - F$ is path connected.
\end{lemma}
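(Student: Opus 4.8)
The plan is to reduce the statement first to connectedness and then to a local non-separation property. First I would note that $X$ is locally path connected: every point $p$ is the vertex of a cone chart $\phi : Y \times (0,\infty] \to U$, the sets $\phi(Y \times (t,\infty])$ for $t>0$ form a neighborhood basis at $p$, and each is again an open cone over $Y$, hence path connected (push a path out to the vertex and back in). Since an open subspace of a locally path connected space is locally path connected, for any open $W \subseteq X$ we have $W$ path connected $\iff$ $W$ connected. As $F$ is finite (hence closed), $X - F$ is open, so it suffices to show $X - F$ is connected for every finite $F$.

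The crux is the claim that every point $p$ of $X$ has a connected open neighborhood $W$ with $W \setminus \{p\}$ connected; equivalently, the link $Y$ in some cone chart at $p$ is connected. To prove it I would introduce the cardinal $b(p) = \sup\{\, \#\pi_0(V \setminus \{p\}) : V \text{ a connected open neighborhood of } p,\ |V| \geq 2 \,\}$, observe $b$ is an invariant of the pointed space and hence constant on $X$ by homogeneity, and check that $b(p)$ equals the number of components of the link in \emph{any} cone chart at $p$ (a cone sub-neighborhood realizes the supremum, and any connected $V$ contains a small cone sub-neighborhood whose radial slices each lie in a single component of $V \setminus \{p\}$). So every link has the same number $b_0$ of components, and the claim is the assertion $b_0 = 1$. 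Suppose $b_0 \geq 2$. Since cone neighborhoods are connected, $X$ is locally connected, and the argument above shows each link $Y$ is locally connected compact metric, hence a finite disjoint union of Peano continua. If some component $Y_1$ of $Y$ were nondegenerate, then a point $x = \phi(y_1,1)$ with $y_1 \in Y_1$ has neighborhood germ equal to that of $(y_1,0)$ in $Y_1 \times \mathbb{R}$ (because $Y_1$ is clopen in $Y$); choosing a connected neighborhood $N$ of $y_1$ in $Y_1$ with $|N| \geq 2$ one verifies $(N \times (-\delta,\delta)) \setminus \{(y_1,0)\}$ is connected and that every punctured neighborhood of $(y_1,0)$ contains such a set, forcing $b(x) = 1 \neq b_0$, a contradiction. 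Hence every component of every link is a single point, so each link is finite with $b_0$ points; but then the germ at $\phi(y,1)$ is the germ of $\mathbb{R}$, giving $b_0 = 2$. Thus every link is $S^0$, so $X$ is a $1$-manifold, necessarily without boundary by homogeneity, hence $\mathbb{R}$ or $S^1$ — contradicting the hypothesis. Therefore $b_0 = 1$.

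With the claim established I would conclude by induction on $|F|$. The case $F = \varnothing$ is the given connectedness. For the step, write $F = F' \cup \{p\}$ and set $Z = X - F'$, which is connected by the inductive hypothesis and locally connected as an open subset of $X$. Take a cone chart at $p$ small enough to miss $F'$; by the claim this gives a connected neighborhood $W \subseteq Z$ of $p$ with $W \setminus \{p\}$ connected. Then $p$ is non-separating in $Z$: if $Z \setminus \{p\} = A \sqcup B$ with $A,B$ nonempty open, then $p \in \overline{A} \cap \overline{B}$ (else $A$ or $B$ would be clopen and proper in $Z$), while $W \setminus \{p\}$ being connected forces it into one of $A,B$, say $A$, whence $W \cap B = \varnothing$ and $p \notin \overline{B}$ — a contradiction. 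So $X - F = Z \setminus \{p\}$ is connected, hence path connected.

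The step I expect to be the main obstacle is the claim that all links are connected, i.e. showing that the case $b_0 \geq 2$ can occur only for $1$-manifolds; everything else is bookkeeping. Within that step the delicate points are confirming that $b(p)$ is genuinely independent of the chosen cone chart and controlling the a priori possibility of a badly non-locally-connected link — which is exactly why establishing local connectedness of $X$, and hence of its links, should come first.
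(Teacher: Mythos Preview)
Your argument is correct and follows a genuinely different route from the paper's. The paper never shows that links are connected; it establishes only that each link has at least three points (ruling out $|Y|=1$ via homogeneity and $|Y|=2$ via the not-a-$1$-manifold hypothesis), and then uses a third point of $Y$ together with Lemma~2---and hence Effros' theorem---to manufacture an ambient homeomorphism supported in a small cone neighborhood that pushes a concrete path off each point of $F$ one at a time. Your proof, by contrast, is self-contained: the local invariant $b(p)$ and the dichotomy ``some link component is nondegenerate'' versus ``all link components are singletons'' force $b_0=1$, after which the inductive step is pure point-set topology with no appeal to Lemma~2. What the paper's approach buys is unity with its overall strategy (everything is done by moving points via Lemma~2); what yours buys is an Effros-free proof of this particular lemma and the sharper structural fact that links are connected---potentially relevant to the non-locally-compact setting discussed in the paper's Section~4, where the dependence on Effros is precisely the sticking point. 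One small item worth making explicit in your write-up: the assertion that every point is the \emph{vertex} of a cone chart already uses homogeneity (some point is a vertex, hence all are); the paper arranges this instead by invoking Lemma~2.
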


\begin{proof}
Since open cone neighborhoods are locally compact and connected, then $X$ is a locally compact, locally connected connected metric space.  Hence, $X$ is path connected.  Suppose $x_0$ and $x_1$ are points in $X - F$ and $\alpha : [0,1] \rightarrow X$ is a path joining $x_0$ to $x_1$.  We will show how to modify $\alpha$ so that it misses $F$ without changing its endpoints.  Suppose $p \in \alpha([0,1]) \cap F$.  Let $U$ be an open cone neighborhood of $p$ that is disjoint from $(F - \{p\}) \cup \{x_0,x_1\}$ and let $\phi : Y \times (0,\infty] \rightarrow U$ be a cone chart for $U$.  Lemma 2 allows us to assume that $p$ is the vertex of $\phi$.  We now describe a process for modifying $\alpha$ so that its image misses $p$ without introducing any new intersections of $\alpha([0,1])$ with $F$ and without  changing $\alpha$ near its endpoints.  Repeating this process for each point of $\alpha([0,1]) \cap F$ will move $\alpha$ to a path joining $x_0$ to $x_1$ in $X - F$.

\indent Let $V = \phi(Y \times (1,\infty])$. Then $V$ is an open cone neighborhood because a cone chart for $V$ can be defined by composing $\phi | Y \times (1,\infty]$ on the right with an obvious homeomorphism from $Y \times (0,\infty]$ to $Y \times (1,\infty]$. Let $t_0 = min\  \alpha^{-1}(\phi(Y \times [1,\infty]))$ and $t_1 = max\  \alpha^{-1}(\phi(Y \times [1,\infty]))$.  Let $y_0$ and $y_1 \in Y$ so that $\alpha(t_0) = \phi(y_0,1)$ and $\alpha(t_1) = \phi(y_1,1)$.  Modify $\alpha$ so that $\alpha([t_0,t_1]) = \phi(\{y_0,y_1\} \times [1,\infty])$ without moving $\alpha | [0,t_0] \cup [t_1,1]$.  Now $\alpha([0,1]) \cap V = \phi(\{y_0,y_1\} \times (1,\infty])$.  (Thus, $\alpha([0,1]) \cap V$ is an open cone over two points with vertex $p$.)  It is still the case that $p = \phi(Y \times \{\infty\}) \in \alpha([0,1]) \cap V$.

\indent Next we observe that $Y$ must have at least three elements.  Indeed, if $Y$ has only one element, say $Y = \{z_1\}$, then $U = \phi(\{z_1\} \times (0,\infty])$ is an open subset of $X$ that is homeomorphic to $(0,\infty]$.  This is impossible because $X$ is homogeneous.  If $Y$ has two elements, say $Y = \{z_1,z_2\}$, then $U = \phi(\{z_1,z_2\} \times (0,\infty])$ is an open subset of $X$ that is homeomorphic to $\mathbb{R}$.  Since $X$ is homogeneous, then it follows that $X$ must be a $1$-manifold which is ruled out by hypothesis.  

\indent Since $Y$ has at least three elements, then there is a point $y_2 \in Y - \{y_0,y_1\}$.  Let $q = \phi(y_2,2)$.  Then $q \in V - \alpha([0,1])$.  Lemma 2 provides a homeomorphism $h : X \rightarrow X$ supported on $V$ such that $h(q) = p$.  Then $h\circ\alpha : [0,1] \rightarrow X$ is a path joining $x_0$ to $x_1$ whose image misses $p$.  Repeating this process for each point of $\alpha([0,1]) \cap F$ will move $\alpha$ to a path joining $x_0$ to $x_1$ in $X - F$.
\end{proof}

\section{The proof of the Theorem}

\begin{proof}[\nopunct]
\ \ First suppose $X$ is a homogeneous locally conical connected separable metric space that is not a $1$-manifold.  We will prove that $X$ is strongly $n$-homogeneous for each $n \geq 2$ by induction. Let $n \geq 1$ and assume $X$ is strongly $n$-homogeneous.  Suppose $A = \{ p_1, \dots , p_n, p_{n + 1} \}$ and $B = \{ q_1, \dots , q_n, q_{n + 1} \}$ are two $(n + 1)$-element subsets of $X$.  By inductive hypothesis, there is a homeomorphism $f : X \rightarrow X$ such that $f(p_i) = q_i$ for $1 \leq i \leq n$.  Let $F = \{ q_1, \dots , q_n \}$.   Lemma 3 provides an arc $A$ joining $f(p_{n + 1})$ to $q_{n + 1}$ in $X - F$.  We can cover the arc $A$ with a finite sequence of open cone neighborhoods $U_1, \dots , U_k$ such that $f(p_{n + 1}) \in U_1$, $q_{n + 1} \in U_k$, $U_i \cap U_{i + 1} \neq \varnothing$ for $1 \leq i < k$, and $U_1 \cup \dots \cup U_k \subset X - F$.  Let $x_0 = f(p_{n + 1})$ and $x_k = q_{n + 1}$, and for $1 \leq i < k$, choose $x_i \in U_i \cap U_{i + 1}$.  For $1 \leq i \leq k$, Lemma 2 provides a homeomorphism $g_i : X \rightarrow X$ supported on $U_i$ such that $g_i(x_{i - 1}) = x_i$.  Thus, $g_k \circ \dots \circ g_1 : X \rightarrow X$ is a homeomorphism supported on $U_1 \cup \dots \cup U_k$ such that $g_k \circ \dots \circ g_1 \circ f(p_{n + 1}) = q_{n + 1}$.  Since $g_k \circ \dots \circ g_1 = id$ on $F$, then for $1 \leq i \leq n$, $g_k \circ \dots \circ g_1 \circ f(p_i) = g_k \circ \dots \circ g_1(q_i) = q_i$.  It follows that $X$ is strongly $(n + 1)$-homogeneous.

\indent To prove the second sentence of the Theorem, we invoke Theorem 3 of \cite{4}.  Assume $X$ is a homogeneous locally conical separable metric space.  (Here we allow the possibility that $X$ is not connected, and we don't exclude the possibility that it is a $1$-manifold.)  Lemma 2 of this paper implies that $X$ is, in the terminology of \cite{4}, \emph{strongly locally homogeneous}.  Hence, Theorem 3 of \cite{4} implies that $X$ is countable dense homogeneous. 
\end{proof}

\section{Question}

\indent The referee has asked whether \textit{the results of this paper can be generalized to spaces that are non locally compact.  Specifically, can they be generalized to separable complete metric spaces?}  Some aspects of the argument can be generalized to the separable complete metric setting.  The definition of \textit{cone chart} can be modified so that all the lemmas except Lemma 2 hold in the more general setting.  (In the definition of \textit{cone chart}, the conditions that the cone chart $\phi$ be proper and that the space $Y$ be compact should be dropped, and a condition should be added which says that $\phi$ maps the collection of sets $\{ Y \times (r,\infty] : r > 0 \}$ to a neighborhood basis for the point $\phi(Y \times \{\infty\}) = \{p\}$.)   Also, as the referee notes, the result of \cite{4} used in the proof of the second sentence of the Theorem generalizes to the separable metric setting.  (See \cite{5}.)  However, because our proof of Lemma 2 relies totally on Theorem 2 of [1], the fate of Lemma 2 is unclear.  To the best of our knowledge, there is currently no analogue of Theorem 2 of [1] for separable metric spaces that are not locally compact.  This issue is discussed in Section 5 of [1].  See Example 2 and Question 3 on pages 52-53 of [1].  Also, a tangentially related example can be found in \cite{3}.  As a result, in the non-locally compact setting, one is unable to conclude that the space X is micro-homogeneous, thwarting the proofs of Lemma 2 and of the Theorem.

\section{Appendix: Alternate proof of Lemma 1}

\indent The alternative proof of Lemma 1 that we present here is inspired by the argument in \cite{7}.  (Also see \cite{14} and \cite{15}.)  First we generalize the notion of \textit{2-interlaced} cone charts.

\begin{definition} 
Let $\phi : Y \times (0,\infty] \rightarrow U$ and $\psi : Z \times (0,\infty] \rightarrow V$ be cone charts for open 
subsets $U$ and $V$ of a metric space $X$.  For $k \geq 2$, $\phi$ and $\psi$ are \emph{$k$-interlaced} if
\begin{itemize}
	\item $\phi(Y \times (2i-1,\infty]) \supset \psi(Z \times [2i,\infty])$ for $1 \leq i \leq k$ and
	\item $\psi(Z \times (2i,\infty]) \supset \phi(Z \times [2i+1,\infty])$ for $1 \leq i \leq k-1$.
\end{itemize}
\end{definition}

\indent The next lemma reveals that $2$-interlaced cone charts can be ``promoted'' to $k$-interlaced cone charts for all $k \geq 2$.

\begin{lemma}
If $k \geq 2$ and $\phi : Y \times (0,\infty] \rightarrow U$ and $\psi : Z \times (0,\infty] \rightarrow V$ are $k$-interlaced cone charts for open subsets $U$ and $V$ of a metric space $X$, then there is a cone chart $\phi' : Y \times (0,\infty] \rightarrow U$ for $U$ such that $\phi' = \phi$ on $Y \times (0,2k-1]$ and $\phi'$ and $\psi$ are $(k+1)$-interlaced.
\end{lemma}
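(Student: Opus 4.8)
I would first pin down which of the $(k+1)$-interlacing inequalities actually require work. If $\phi'$ is \emph{any} cone chart for $U$ with $\phi'=\phi$ on $Y\times(0,2k-1]$, then $\phi'(Y\times(2k-1,\infty])=U\setminus\phi'(Y\times(0,2k-1])=U\setminus\phi(Y\times(0,2k-1])=\phi(Y\times(2k-1,\infty])$, and therefore $\phi'(Y\times(c,\infty])=\phi(Y\times(c,\infty])$ for every $c\le 2k-1$ and $\phi'(Y\times[c,\infty])=\phi(Y\times[c,\infty])$ for every $c$ with $0<c\le 2k-1$. In particular $\phi'(Y\times(2i-1,\infty])=\phi(Y\times(2i-1,\infty])$ for $1\le i\le k$ and $\phi'(Y\times[2i+1,\infty])=\phi(Y\times[2i+1,\infty])$ for $1\le i\le k-1$, so every $k$-interlacing inequality relating $\phi$ and $\psi$ is already a $(k+1)$-interlacing inequality relating $\phi'$ and $\psi$. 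Only the two genuinely new ones remain to be arranged, namely
\[
\phi'(Y\times(2k+1,\infty])\supset\psi(Z\times[2k+2,\infty])\qquad\text{and}\qquad\psi(Z\times(2k,\infty])\supset\phi'(Y\times[2k+1,\infty]).
\]
Hence the whole task is to recoordinatize $\phi$ on $Y\times[2k-1,\infty]$, leaving it unchanged near level $2k-1$, so that the new ``central cone'' $\phi'(Y\times[2k+1,\infty])$ nestles between the $\psi$-level sets $\psi(Z\times[2k+2,\infty])$ and $\psi(Z\times(2k,\infty])$.

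Put $W=\phi(Y\times(2k-1,\infty])$. The extreme $k$-interlacing inequalities give $\psi(Z\times[2k,\infty])\subset W$ and $\phi(Y\times[2k-1,\infty])\subset\psi(Z\times(2k-2,\infty])$, and a routine compactness argument then produces $\delta>0$ with $\psi(Z\times[2k,\infty])\subset\phi(Y\times(2k-1+\delta,\infty])$. My plan is to keep $\phi'=\phi$ on $Y\times(0,2k-1+\delta]$ (which is legitimate and a fortiori gives $\phi'=\phi$ on $Y\times(0,2k-1]$) and to set $\phi'=H\circ\phi$ on $Y\times[2k-1+\delta,\infty]$, where $H\colon W\to W$ is a homeomorphism that is the identity on $\phi(Y\times(2k-1,2k-1+\delta])$ --- so that the two formulas paste to a genuine cone chart, continuity and properness at the seam being immediate --- and that carries the standard subcone $\phi(Y\times[2k+1,\infty])$ of $W$ onto a compact set wedged between $\psi(Z\times[2k+2,\infty])$ and $\psi(Z\times(2k,\infty])$.

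Everything now reduces to building $H$. Note that $H$ is forced to move the vertex $p$ of $\phi$: the second displayed condition puts the vertex of $\phi'$, namely $H(p)$, into $\psi(Z\times(2k+2,\infty])$, and $p$ itself need not lie in that set. To construct $H$ I would use the two ``concentric collars'' that the interlacing supplies for free: the $\psi$-collar $\psi(Z\times(2k,2k+2])$ of $\psi(Z\times[2k+2,\infty])$ inside $\psi(Z\times(2k,\infty])$, and the $\phi$-collar of $\phi(Y\times[2k+1,\infty])$ inside $W$. With these one slides the standard subcone inward along the $\phi$-rays until both it and its vertex have entered $\psi(Z\times(2k,\infty])$, and then inflates it along the $\psi$-rays, staying inside $\psi(Z\times(2k,\infty])$, until it engulfs $\psi(Z\times[2k+2,\infty])$.

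The hard part, I expect, is ferrying the vertex across: one cannot merely identify the two cone structures, since the links $Y$ and $Z$ need not be homeomorphic, so $p$ must be transported ambiently inside $W$. This is exactly where the full strength of $k$-interlacing enters. It yields a long chain of cone neighbourhoods $\phi(Y\times(1,\infty])\supset\psi(Z\times[2,\infty])\supset\cdots\supset W\supset\psi(Z\times[2k,\infty])$, alternating between cones on $Y$ and cones on $Z$, with all the relevant collars furnished by $\phi$ and $\psi$; it is from the ``concentric'' nature of this chain, by the shift-and-absorb reasoning that drives the argument of \cite{7}, that one manufactures a homeomorphism of $W$, fixed near $\phi(Y\times\{2k-1\})$, that realigns $p$ with a point of $\psi(Z\times(2k+2,\infty])$. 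Once such an $H$ is in hand, checking that $\phi'=H\circ\phi$ (pasted with $\phi$) is a cone chart for $U$ that is $(k+1)$-interlaced with $\psi$ is exactly the bookkeeping of the first paragraph.
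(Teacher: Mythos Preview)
Your opening reduction is correct: once $\phi'=\phi$ on $Y\times(0,2k-1]$, all the old interlacing inequalities transfer automatically, and only the two displayed conditions at levels $2k$, $2k+1$, $2k+2$ remain to be arranged. The gap is in your construction of $H$.

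You diagnose moving the vertex as the hard step and propose to handle it by a shift-and-absorb argument based on the full interlacing chain $\phi(Y\times(1,\infty])\supset\psi(Z\times[2,\infty])\supset\cdots\supset W\supset\psi(Z\times[2k,\infty])$. But that chain lies almost entirely \emph{outside} $W$; inside $W$ it gives you only the single inclusion $W\supset\psi(Z\times[2k,\infty])$, not the infinite nested family a Brown-style argument would need. As written, your last paragraph does not produce a homeomorphism of $W$ fixed near $\phi(Y\times\{2k-1\})$ that moves $p$. More to the point, no such machinery is necessary: the $(k-1)$-st interlacing inequality already places $p\in\phi(Y\times[2k-1,\infty])\subset\psi(Z\times(2k-2,\infty])$, so $p$ sits on a $\psi$-ray, and a single $\psi$-radial push carries it (and the whole set $\phi(Y\times[2k-1,\infty])$) into $\psi(Z\times(2k,\infty])$ directly.

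This is exactly what the paper does. After sharpening the top three inclusions by a margin $r>0$, it sets $\phi'=\gamma\circ\beta\circ\alpha\circ\phi$ with $\alpha,\gamma$ being $\phi$-radial and $\beta$ being $\psi$-radial, each given by an explicit piecewise-linear reparametrization of $(0,\infty]$. The map $\alpha$ pushes outward so that $\alpha\circ\phi(Y\times[2k+1,\infty])=\phi(Y\times[2k-1,\infty])\subset\psi(Z\times(2k-2+r,\infty])$; then $\beta$ pushes inward along $\psi$-rays, carrying this set (and $p$ with it) into $\psi(Z\times(2k,\infty])$ while fixing $\psi(Z\times[2k+2,\infty])$; finally $\gamma$ undoes $\alpha$ near the boundary so that $\phi'=\phi$ on $Y\times(0,2k-1]$. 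Only the interlacing conditions at $i=k-1$ and $i=k$ are used in constructing $\phi'$; the lower levels enter only through your (correct) first paragraph, so ``the full strength of $k$-interlacing'' does not enter where you claim it does.
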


\begin{proof}  
We explain the case $k = 2$.  The other cases are similar.

\indent There is an $r > 0$ such that $\phi(Y \times (1+r,\infty]) \supset \psi(Z \times [2,\infty])$, $\psi(Z \times (2+r,\infty]) \supset \phi(Z \times [3,\infty])$ and $\phi(Y \times (3+r,\infty]) \supset \psi(Z \times [4,\infty])$.

\indent Let $\lambda, \mu, \nu : (0,\infty] \rightarrow (0,\infty]$ be homeomorphisms with the following properties.
\begin{itemize}
	\item $\lambda = id$ on $(0,1] \cup \{\infty\}, \lambda(3) = 1+r$ and $\lambda(5) = 3$.
	\item $\mu = id$ on $(0,2] \cup [6,\infty]$ and $\mu(2+r) = 4$.
	\item $\nu = id$ on $(0,1], \nu|[1,1+r] = (\lambda|[1,3])^{-1}$ and $\nu = id$ on $[3+r,\infty])$.
\end{itemize}	
Next define homeomorphisms $\alpha, \beta, \gamma : X \rightarrow X$ as follows.
\begin{itemize}
	\item $\alpha(\phi(y,t)) = \phi(y,\lambda(t))$ for $(y,t) \in Y \times (0,\infty]$ and $\alpha = id$ on $X - \phi(Y \times [1,\infty])$.
	\item $\beta(\psi(z,t)) = \psi(z,\mu(t))$ for $(z,t) \in Z \times (0,\infty]$ and $\beta = id$ on $X - \psi(Z \times [2,\infty])$.
	\item $\gamma(\phi(y,t)) = \phi(y,\nu(t))$ for $(y,t) \in Y \times (0,\infty]$ and $\gamma = id$ on $X - \phi(Y \times [1,\infty])$.
\end{itemize}
Finally, define the map $\phi' : Y \times (0,\infty] \rightarrow X$ by $\phi' = \gamma \circ \beta \circ \alpha \circ \phi$.

\indent Since $\alpha, \beta$ and $\gamma$ are homeomorphisms of $X$ that map $U = \phi(Y \times (0,\infty])$ onto $U$, then $\phi'$ maps $Y \times (0,\infty]$ onto $U$.  Thus, $\phi'$ is a cone chart for $U$.

\indent Since $\alpha$ maps $\phi(Y \times (0,3])$ onto $\phi(Y \times (0,1+r])$, and $\phi(Y \times (0,1+r])$ is contained in the set $(X - U) \cup \psi(Z \times ((0,2))$ on which $\beta = id$, then $\gamma \circ \beta \circ \alpha = \gamma \circ \alpha$ on $\phi(Y \times (0,3])$.  Also $\nu \circ \lambda = id$ on $(0,3]$.  Hence, if $(y,t) \in Y \times (0,3]$, then $\phi'(y,t) =  \gamma \circ \alpha \circ \phi(y,t) = \phi(y,\nu \circ \lambda(t)) = \phi(y,t)$.  Thus, $\phi' = \phi$ on $Y \times (0,3]$.  Also $\phi'(Y \times (1,\infty]) = \phi(Y \times (1,\infty]), \phi'(Y \times [3,\infty]) = \phi(Y \times [3,\infty])$ and $\phi'(Y \times (3,\infty]) = \phi(Y \times (3,\infty])$.  Therefore, $\phi'$ and $\psi$ are $2$-interlaced.

\indent Since $\gamma = id$ on $\psi(Z \times [4,\infty])$, then $\psi(Z \times (4,\infty]) = \gamma \circ \psi(Z \times (4,\infty]) = \gamma \circ \beta \circ \psi(Z \times (2+r,\infty]) \supset \gamma \circ \beta \circ \phi(Y \times [3,\infty]) = \gamma \circ \beta \circ \alpha \circ \phi(Y \times [5,\infty]) = \phi'(Y \times [5,\infty])$.

\indent Since $\gamma \circ \beta = id$ on $\psi(Z \times [6,\infty])$, then $\psi(Z \times [6,\infty]) = \gamma \circ \beta \circ \psi(Z \times [6,\infty]) \subset \gamma \circ \beta \circ \phi(Y \times (3,\infty]) = \gamma \circ \beta \circ \alpha \circ \phi(Y \times (5,\infty]) = \phi'(Y \times (5,\infty])$.

\indent This proves $\phi'$ and $\phi$ are $3$-interlaced.
 
\indent This argument can be transformed into an argument for $k \geq 2$ by replacing the ``levels'' $1, 2, 3, 4, 5, 6$ by the ``levels'' $2k-3, 2k-2, 2k-1, 2k, 2k+1, 2k+2$. 
\end{proof}
 
 \indent The following lemma states that if two $2$-interlaced cone charts have vertices $p$ and $q$, then the cone chart with vertex $p$ can be perturbed to have vertex $q$ without moving it near its ``base''.  The proof of this lemma requires infinitely many applications of the previous lemma. This lemma is the key to the alternative proof of Lemma 1.
 
 \begin{lemma}
 If $\phi : Y \times (0,\infty] \rightarrow U$ and $\psi : Z \times (0,\infty] \rightarrow V$ are $2$-interlaced cone charts for open subsets $U$ and $V$ of a metric space $X$ with vertices $p$ and $q$, respectively, then there is a cone chart $\chi : Y \times (0,\infty] \rightarrow U$ for $U$ such that $\chi = \phi$ on $Y \times (0,3]$ and $\chi$ has vertex $q$.
 \end{lemma}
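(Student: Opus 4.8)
The plan is to apply the preceding lemma infinitely often and pass to a limit. First, put $\phi_2 = \phi$, which is $2$-interlaced with $\psi$ by hypothesis, and apply the preceding lemma successively with $k = 2, 3, 4, \ldots$ to obtain cone charts $\phi_k : Y \times (0,\infty] \to U$ for $U$ such that $\phi_{k+1} = \phi_k$ on $Y \times (0, 2k-1]$ and $\phi_k$ and $\psi$ are $k$-interlaced, for every $k \geq 2$. Iterating the stabilization property, $\phi_{k'} = \phi_k$ on $Y \times (0,2k-1]$ whenever $k' \geq k \geq 2$; in particular every $\phi_k$ agrees with $\phi$ on $Y \times (0,3]$. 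Define $\chi : Y \times (0,\infty] \to X$ by $\chi(y,t) = \phi_k(y,t)$ for any $k \geq 2$ with $2k-1 \geq t$, and $\chi(Y \times \{\infty\}) = \{q\}$. This is well defined by the stabilization, $\chi = \phi$ on $Y \times (0,3]$, and $\chi$ is continuous on $Y \times (0,\infty)$ since it agrees with the continuous map $\phi_k$ on the open set $Y \times (0,2k-1)$, and these sets cover $Y \times (0,\infty)$.

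Next I would verify that $\chi$ is a cone chart for $U$ with vertex $q$. Since $Y$ is compact metric and $\chi$ collapses $Y \times \{\infty\}$ to the single point $q$, it remains to show that $\chi$ is proper and restricts to a homeomorphism of $Y \times (0,\infty)$ onto $U - \{q\}$. Injectivity on $Y \times (0,\infty)$ is clear: two distinct points lie in a common $Y \times (0,2k-1)$, on which $\chi = \phi_k$ is injective. To see that $q \notin \chi(Y \times (0,\infty))$, fix $(y,t)$ with $t$ finite, choose $k$ with $2k-1 > t$, and use the case $i = k$ of the first family of $k$-interlacing inclusions: $\phi_k(Y \times (2k-1,\infty]) \supset \psi(Z \times [2k,\infty]) \ni q$, while $\chi(y,t) = \phi_k(y,t) \in \phi_k(Y \times (0,2k-1))$, so injectivity of $\phi_k$ forces $\chi(y,t) \neq q$. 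For surjectivity onto $U - \{q\}$, take $x \in U$ with $x \neq q$ and choose $k$ with $x \notin \psi(Z \times (2k-2,\infty])$ — possible since $\{\psi(Z \times (s,\infty]) : s > 0\}$ is a neighborhood basis at $q$; then the case $i = k-1$ of the second family of $k$-interlacing inclusions gives $\psi(Z \times (2k-2,\infty]) \supset \phi_k(Y \times [2k-1,\infty])$, so $x \notin \phi_k(Y \times [2k-1,\infty])$, whence $x = \phi_k(y,t)$ with $t < 2k-1$ and therefore $x = \chi(y,t)$.

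It remains to check continuity of $\chi$ along $Y \times \{\infty\}$ and properness. For the former, one shows $\chi(Y \times (2k-1,\infty]) \subset \psi(Z \times (2k-2,\infty])$ for each $k \geq 2$: a point of level $t > 2k-1$ is computed using some $\phi_j$ with $j \geq k+1$, and the case $i = k-1$ of $j$-interlacing places $\phi_j(Y \times [2k-1,\infty])$ inside $\psi(Z \times (2k-2,\infty])$; since these sets shrink to $q$, continuity follows. For properness, if $C \subset U$ is compact then $\chi^{-1}(C)$ cannot meet levels arbitrarily close to $0$, because $\chi = \phi$ on $Y \times (0,3]$ and $\phi$ is proper; hence $\chi^{-1}(C)$ is a closed subset of a compact tube $Y \times [a,\infty]$, so it is compact, and the same holds for the restriction to $Y \times (0,\infty)$ since $q \notin C$. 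A continuous proper bijection from the locally compact metrizable space $Y \times (0,\infty)$ onto the locally compact space $U - \{q\}$ is a homeomorphism, so $\chi$ is the desired cone chart.

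The main obstacle is not any single deep step but the bookkeeping with interlacing levels: one must match each claim about $\chi$ to the correct case of the $k$-interlacing definition and keep the shifted index $i = k-1$ legitimate, which is why $k \geq 2$ is used throughout. The substantive point concealed by this bookkeeping is that although the vertices of the $\phi_k$ may all differ from $p$ (in fact drifting toward $q$), the limit chart $\chi$ has vertex exactly $q$; this is precisely what the surjectivity and $\infty$-continuity arguments secure, using that $\psi(Z \times (s,\infty])$ collapses to $q$ as $s \to \infty$.
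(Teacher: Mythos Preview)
Your proof is correct and follows essentially the same route as the paper's: iterate Lemma~4 to obtain the stabilizing sequence $\phi_k$, define $\chi$ as the limit with $\chi(Y\times\{\infty\})=\{q\}$, and verify that $\chi$ is a cone chart by checking bijectivity onto $U-\{q\}$, continuity at $Y\times\{\infty\}$ via the interlacing inclusions, and properness via $\chi=\phi$ on $Y\times(0,3]$. The only cosmetic difference is that the paper establishes $\chi(Y\times(0,\infty))=U-\{q\}$ by two set inclusions, whereas you separate this into injectivity, the check $q\notin\chi(Y\times(0,\infty))$, and surjectivity, then invoke properness to upgrade the bijection to a homeomorphism.
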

 
 \begin{proof}
 Starting from the given hypothesis, repeated use of Lemma 4 yields a sequence $\phi_i : Y \times (0,\infty] \rightarrow U, i \geq 2$ of cone charts for $U$ such that $\phi_2 = \phi$, $\phi_{i+1} = \phi_i$ on $Y \times (0,2i-1]$ and $\phi_i$ and $\psi$ are $i$-interlaced for $i \geq 2$.
 
 \indent Define $\chi : Y \times (0,\infty] \rightarrow U$ by $\chi = \phi_i$ on $Y \times (0,2i-1]$ for $i \geq 2$ and $\chi(Y \times \{\infty\}) = \{q\}$.  Then clearly $\chi = \phi$ on $Y \times (0,3]$.  We must prove that $\chi$ is a cone chart for $U$.    

\indent Since $\chi = \phi_i$ on $Y \times (0,2i-1)$ for $i \geq 2$, then it is easily seen that $\chi$ maps $Y \times (0,\infty)$ homeomorphically onto its image.  

\indent Since
\begin{align*}
		&U - \{q\} = U - \psi(Z \times \{\infty\}) \supset U - \psi(Z \times [2i,\infty]) \supset\\
		&U - \phi_i(Y \times [2i-1,\infty]) = \phi_i(Y \times (0,2i-1)) = \chi(Y \times (0,2i-1))
\intertext{for each $i \geq 2$, then $U - \{q\} \supset \bigcup_{i \geq 2}\chi(Y \times (0,2i-1)) = \chi(Y \times (0,\infty))$.  Since}
		&\chi(Y \times (0,\infty)) \supset \chi(Y \times (0,2i-1)) = \phi_i(Y \times (0,2i-1)) =\\
		&U - \phi_i(Y \times [2i-1,\infty]) \supset U - \psi(Z \times (2i-2,\infty])
\intertext{for each $i \geq 2$, then}
		&\chi(Y \times (0,\infty)) \supset \bigcup_{i \geq 2}(U - \psi(Z \times (2i-2,\infty])) =\\
		&U - \bigcap_{i \geq 2}(\psi(Z \times (2i-2,\infty])) = U - \psi(Z \times \{\infty\}) = U - \{q\}.
\end{align*}
This proves $U - \{q\} = \chi(Y \times (0,\infty))$.  Thus, $\chi$ maps $Y \times (0,\infty]$ onto $U$.

\indent Next we prove the continuity of $\chi$ at points of $Y \times \{\infty\}$.  To this end, let $W$ be a neighborhood of $\{q\} = \chi(Y \times \{\infty\}) = \psi(Z \times \{\infty\})$ in $X$.  Then $\psi^{-1}(W)$ is a neighborhood of 
$Z \times \{\infty\}$ in $Z \times (0,\infty]$.  Since $Z$ is compact, there is a $i \geq 1$ such that $\psi(Z \times (2i,\infty]) \subset W$.  Hence, for each $j > i$,
\begin{align*}
		&W \supset \psi(Z \times (2i,\infty]) \supset \phi_j(Y \times [2i+1,\infty]) \supset\\ 
		&\phi_j(Y \times [2i+1,2j-1]) = \chi(Y \times [2i+1,2j-1]).  
\end{align*}
Thus, $W \supset \bigcup_{j > i}\chi(Y \times [2i+1,2j-1]) = \chi(Y \times [2i+1,\infty))$.  Consequently, $\chi(Y \times (2i+1,\infty]) \subset W$, proving the continuity of $\chi$ at points of $Y \times \{\infty\}$.

\indent To complete the proof that $\chi$ is a cone chart for $U$, we must show $\chi : Y \times (0,\infty] \rightarrow U$ is a proper map.  To accomplish this, let $C$ be a compact subset of $U$.  Since $\chi$ is continuous and $\chi = \phi$ on $Y \times (0,3]$, then $\chi^{-1}(C)$ is a closed subset of $\phi^{-1}(C) \cup (Y \times [3,\infty])$. The latter set is compact because $\phi$ is proper.  Thus $\chi^{-1}(C)$ is compact. 
\end{proof}

\begin{proof}[Alternate proof of Lemma 1 using Lemma 5]
Suppose $\phi : Y \times (0,\infty] \rightarrow U$ and $\psi : Z \times (0,\infty] \rightarrow V$ are $2$-interlaced cone charts for open subsets $U$ and $V$ of a metric space $X$ with vertices $p$ and $q$, respectively.  Then Lemma 5 implies there is a cone chart $\chi : Y \times (0,\infty] \rightarrow U$ for $U$ such that $\chi = \phi$ on $Y \times (0,3]$ and $\chi$ has vertex $q$.  Define $h : X \rightarrow X$ by $h | U = \chi \circ \phi^{-1}$ and $h | X - U = id$.  Clearly, $h : X \rightarrow X$ is a homeomorphism and $h(p) = q$.  Since $h = id$ on $(X - U) \cup \phi(Y \times (0,3]) = X -  \phi(Y \times [3,\infty])$ and $\phi(Y \times [3,\infty]) \subset U \cap \psi(Z \times (2,\infty]) \subset U \cap V$, then h is supported on $U \cap V$. 
\end{proof}

\end{document}